\providecommand{\tabularnewline}{\\}
\newtheorem{theorem}{Theorem} 
\newtheorem{lemma}[theorem]{Lemma} 
\newtheorem{conjecture}[theorem]{Conjecture}
\newenvironment{proof}[1][Proof]
 {\begin{trivlist} \item[\hskip \labelsep {\bfseries #1}]}{\end{trivlist}}  \newenvironment{example}[1][Example]
 {\begin{trivlist} \item[\hskip \labelsep {\bfseries #1}]}{\end{trivlist}} 
\newcommand{\qed}{\nobreak \ifvmode \relax \else       \ifdim\lastskip<1.5em \hskip-\lastskip       \hskip1.5em plus0em minus0.5em \fi \nobreak       \vrule height0.75em width0.5em depth0.25em\fi}
\date{}
\begin{document}

\title{Connections between discriminants and the root distribution of polynomials
with rational generating function}

\author{Khang Tran\\
Department of Mathematics and Computer Science\\
Truman State University}
\maketitle
\begin{abstract}
Let $H_{m}(z)$ be a sequence of polynomials whose generating function
$\sum_{m=0}^{\infty}H_{m}(z)t^{m}$ is the reciprocal of a bivariate
polynomial $D(t,z)$. We show that in the three cases $D(t,z)=1+B(z)t+A(z)t^{2}$,
$D(t,z)=1+B(z)t+A(z)t^{3}$ and $D(t,z)=1+B(z)t+A(z)t^{4}$, where
$A(z)$ and $B(z)$ are any polynomials in $z$ with complex coefficients,
the roots of $H_{m}(z)$ lie on a portion of a real algebraic curve
whose equation is explicitly given. The proofs involve the $q$-analogue
of the discriminant, a concept introduced by Mourad Ismail. 
\end{abstract}
\footnote{The author acknowledges support from NSF grant DMS-0838434 \textquotedblright{}EMSW21MCTP:
Research Experience for Graduate Students\textquotedblright{} from
the University of Illinois at Urbana-Champaign.%
}

\section{Introduction}

In this paper we study the root distribution of a sequence of polynomials
satisfying one of the following three-term recurrences:
\begin{eqnarray*}
H_{m}(z)+B(z)H_{m-1}(z)+A(z)H_{m-2}(z) & = & 0,\\
H_{m}(z)+B(z)H_{m-1}(z)+A(z)H_{m-3}(z) & = & 0,\\
H_{m}(z)+B(z)H_{m-1}(z)+A(z)H_{m-4}(z) & = & 0,
\end{eqnarray*}
with certain initial conditions and $A(z),B(z)$ polynomials in $z$
with complex coefficients. For the study of the root distribution
of other sequences of polynomials that satisfy three-term recurrences,
see \cite{cc} and \cite{hs}. In particular, we choose the initial
conditions so that the generating function is 
\[
\sum_{m=0}^{\infty}H_{m}(z)t^{m}=\frac{1}{D(t,z)}
\]
where $D(t,z)=1+B(z)t+A(z)t^{2}$, $D(t,z)=1+B(z)t+A(z)t^{3}$, or
$D(t,z)=1+B(z)t+A(z)t^{4}$. We notice that the root distribution
of $H_{m}(z)$ will be the same if we replace $1$ in the numerator
by any monomial $N(t,z)$. If $N(t,z)$ is not a monomial, the root
distribution will be different. The quadratic case $D(t,z)=1+B(z)t+A(z)t^{2}$
is not difficult and it is also mentioned in \cite{tz}. We present
this case in Section 2 because it gives some directions to our main
cases, the cubic and quartic denominators $D(t,z)$, in Sections 3
and 4. 

Our approach uses the concept of $q$-analogue of the discriminant
($q$-discriminant) introduced by Ismail \cite{ismail}. The $q$-discriminant
of a polynomial $P_{n}(x)$ of degree $n$ and leading coefficient
$p$ is 
\begin{equation}
\mathrm{Disc}_{x}(P;q)=p^{2n-2}q^{n(n-1)/2}\prod_{1\le i<j\le n}(q^{-1/2}x_{i}-q^{1/2}x_{j})(q^{1/2}x_{i}-q^{-1/2}x_{j})\label{eq:qdisc}
\end{equation}
where $x_{i}$, $1\le i\le n,$ are the roots of $P_{n}(x)$. This
$q$-discriminant is $0$ if and only if a quotient of roots $x_{i}/x_{j}$
equals $q$. As $q\rightarrow1$, this $q$-discriminant becomes the
ordinary discriminant which is denoted by $\mathrm{Disc}_{x}P(x)$.
For the study of resultants and ordinary discriminants and their various
formulas, see \cite{aar}, \cite{apostal}, \cite{dilcherstolarsky},
and \cite{gisheismail}.

We will see that the concept of $q$-discriminant is useful in proving
connections between the root distribution of a sequence of polynomials
$H_{m}(z)$ and the discriminant of the denominator of its generating
function $\mathrm{Disc}_{t}D(t,z)$. We will show in the three cases
mentioned above that the roots of $H_{m}(z)$ lie on a portion of
a real algebraic curve (see Theorem \ref{quadratic}, Theorem \ref{cubic},
and Theorem \ref{quartic}). For the study of sequences of polynomials
whose roots approach fixed curves, see \cite{boyergoh,boyergoh-1,boyergoh-2}.
Other studies of the limits of zeros of polynomials satisfying a linear
homogeneous recursion whose coefficients are polynomials in $z$ are
given in \cite{bkw,bkw-1}. The $q$-discriminant will appear as the
quotient $q$ of roots in $t$ of $D(t,z)$. One advantage of looking
at the quotients of roots is that, at least in the three cases above,
although the roots of $H_{m}(z)$ lie on a curve depending on $A(z)$
and $B(z)$, the quotients of roots $t=t(z)$ of $D(t,z)$ lie on
a fixed curve independent of these two polynomials. We will show that
this independent curve is the unit circle in the quadratic case and
two peculiar curves (see Figures 1 and 2 in Sections 3 and 4) in the
cubic and quartic cases. From computer experiments, this curve looks
more complicated in the quintic case $D(z,t)=1+B(z)t+A(z)t^{5}$ (see
Figure 3 in Section 4).

As an application of these theorems, we will consider an example where
$D(t,z)=1+(z^{2}-2z+a)t+z^{2}t^{2}$ and $a\in\mathbb{R}$. We will
see that the roots of $H_{m}(z)$ lie either on portions of the circle
of radius $\sqrt{a}$ or real intervals depending on the value $a$
compared to the critical values $0$ and $4$. Also, the endpoints
of the curves where the roots of $H_{m}(z)$ lie are roots of $\mathrm{Disc}_{t}D(t,z)$.
Interestingly, the critical values $0$ and $4$ are roots of the
double discriminant $\mathrm{Disc}_{z}\mathrm{Disc}_{t}D(t,z)=4096a^{3}(a-4).$

\section{The quadratic denominator}

In this section, we will consider the root distribution of $H_{m}(z)$
when the denominator of the generating function is $D(t,z)=1+B(z)t+A(z)t^{2}$. 

\begin{theorem}\label{quadratic}

Let $H_{m}(z)$ be a sequence of polynomials whose generating function
is
\[
\sum H_{m}(z)t^{m}=\frac{1}{1+B(z)t+A(z)t^{2}}
\]
where $A(z)$ and $B(z)$ are polynomials in $z$ with complex coefficients.
The roots of $H_{m}(z)$ which satisfy $A(z)\ne0$ lie on the curve
$\mathcal{C}_{2}$ defined by
\[
\Im\frac{B^{2}(z)}{A(z)}=0\qquad\mbox{and}\qquad0\le\Re\frac{B^{2}(z)}{A(z)}\le4,
\]
and are dense there as $m\rightarrow\infty$. 

\end{theorem}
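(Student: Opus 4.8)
The plan is to obtain $H_m(z)$ in closed form and read off its zeros as a condition on the ratio of the two roots of $D(t,z)$ in $t$. Since $\sum_m H_m(z)t^m = 1/(1+B(z)t+A(z)t^2)$, the sequence satisfies $H_m + BH_{m-1} + AH_{m-2} = 0$ with $H_0 = 1$, $H_1 = -B$. Over $\mathbb{C}(z)$ factor $D(t,z) = A(t-t_1)(t-t_2)$, so that $t_1 t_2 = 1/A$ and $t_1 + t_2 = -B/A$. A partial-fraction expansion of $1/D$ and extraction of the coefficient of $t^m$ gives, on the locus where $t_1 \ne t_2$,
\[
H_m(z) = A^m\,\frac{t_1^{m+1} - t_2^{m+1}}{t_1 - t_2}.
\]
First I would record this identity and note that for $z$ with $A(z)\ne 0$ the degenerate locus $t_1 = t_2$ (where $B^2 = 4A$) contributes no zeros, since there $H_m = (m+1)A^m t_1^m$ and $t_1^{m+1} = t_1 t_2 \cdots \ne 0$. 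Hence $H_m(z)=0$ with $A(z)\ne0$ holds exactly when $(t_1/t_2)^{m+1}=1$ and $t_1/t_2 \ne 1$, i.e. the quotient $q := t_1/t_2$ is a nontrivial $(m+1)$-st root of unity.

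Next I would make the bridge to the $q$-discriminant explicit, which is the mechanism singled out in the introduction. Applying the definition \eqref{eq:qdisc} to the quadratic $D$ (degree $2$, leading coefficient $A$, roots $t_1,t_2$) and simplifying via $t_1 t_2 = 1/A$, $t_1^2+t_2^2 = B^2/A^2 - 2/A$ yields
\[
\mathrm{Disc}_t(D;q) = qB^2 - A(q+1)^2,
\]
so $\mathrm{Disc}_t(D;q)=0$ precisely when $B^2/A = q + q^{-1} + 2$. Combining this with the previous step, the zeros of $H_m$ with $A(z)\ne0$ are exactly the $z$ for which $B^2(z)/A(z) = 2 + 2\cos\tfrac{2\pi k}{m+1}$ for some $k \in \{1,\dots,m\}$. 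Every such value is real and lies in $[0,4)$, which places $z$ on $\mathcal{C}_2$ and proves the containment half of the theorem; the endpoints $0$ and $4$ correspond to $q=-1$ and to the excluded value $q=1$ (the discriminant locus $B^2=4A$), and are attained only in the limit.

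For the density statement I would set $w(z) := B^2(z)/A(z)$, a rational function holomorphic on $\{A\ne0\}$, and observe that $\mathcal{C}_2 = w^{-1}\big([0,4]\big)$ while the zeros of $H_m$ are $w^{-1}(S_m)$ with $S_m = \{\,2 + 2\cos(2\pi k/(m+1)) : 1\le k\le m\,\}$. Since $\bigcup_m S_m$ is dense in $[0,4]$, it remains to transfer this density through $w$: given $z_0\in\mathcal{C}_2$ and any neighborhood $U$ of $z_0$, the open mapping theorem applied to the nonconstant holomorphic $w$ shows $w(U)$ is a neighborhood of $w(z_0)$, hence meets $S_m$ for all large $m$, producing a genuine zero of $H_m$ inside $U$. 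This treats regular points, critical/branch points of $w$, and the endpoints of the arc uniformly, and yields density on $\mathcal{C}_2$.

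The routine parts are the closed form for $H_m$ and the $q$-discriminant computation. The step I expect to require the most care is the density argument, where one must guarantee that \emph{every} point of $\mathcal{C}_2$ is approached by actual zeros rather than merely that the zeros accumulate somewhere on the curve; openness of $w$ is exactly what supplies this, with the trivial case $B^2/A \equiv \mathrm{const}$ (where $\mathcal{C}_2$ is all of $\{A\ne0\}$ or empty) dispatched separately.
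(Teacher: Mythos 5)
Your proposal is correct and follows essentially the same route as the paper: partial fractions give the closed form for $H_m$, a zero forces the quotient $q=t_1/t_2$ to be a nontrivial $(m+1)$-st root of unity, the $q$-discriminant identity $\mathrm{Disc}_t(D;q)=q\left(B^2-(q+q^{-1}+2)A\right)$ converts this to $B^2/A=q+q^{-1}+2\in[0,4)$, and density follows from openness of the map $B^2(z)/A(z)$. Your write-up is in fact slightly more careful than the paper's (explicitly invoking the open mapping theorem, dispatching the constant $B^2/A$ case, and showing the locus $t_1=t_2$ carries no zeros rather than merely noting it lies on $\mathcal{C}_2$), but these are refinements of the same argument, not a different one.
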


\begin{proof}

Suppose $z_{0}$ is a root of $H_{m}(z)$ which satisfies $A(z_{0})\ne0$.
Let $t_{1}=t_{1}(z_{0})$ and $t_{2}=t_{2}(z_{0})$ be the roots of
$D(t,z_{0})$. If $t_{1}=t_{2}$ then $\mathrm{Disc}_{t}D(t,z_{0})=B^{2}(z_{0})-4A(z_{0})=0$.
In this case $z_{0}$ belongs to $\mathcal{C}_{2}$, and we only need
to consider the case $t_{1}\ne t_{2}$. By partial fractions, we have
\begin{eqnarray}
\frac{1}{D(t,z_{0})} & = & \frac{1}{A(z_{0})(t-t_{1})(t-t_{2})}\nonumber \\
 & = & \frac{1}{A(z_{0})(t_{1}-t_{2})}\left(\frac{1}{t-t_{1}}-\frac{1}{t-t_{2}}\right)\nonumber \\
 & = & \frac{1}{A(z_{0})}\sum_{m=0}^{\infty}\frac{t_{1}^{m+1}-t_{2}^{m+1}}{(t_{1}-t_{2})t_{1}^{m+1}t_{2}^{m+1}}t^{n}.\label{eq:quadraticT}
\end{eqnarray}
Thus if we let $t_{1}=qt_{2}$ then $q$ is an $(m+1)$-st root of
unity and $q\ne1$. By the definition of $q$-discriminant in \eqref{eq:qdisc},
$q$ is a root of $\mathrm{Disc}_{t}(D(t,z_{0});q)$ which equals
\[
q\left(B^{2}(z_{0})-(q+q^{-1}+2)A(z_{0})\right).
\]
This implies that 
\[
\frac{B^{2}(z_{0})}{A(z_{0})}=q+q^{-1}+2.
\]
Thus $z_{0}\in\mathcal{C}_{2}$ since $q$ is an $(m+1)$-th root
of unity. 

The map $B^{2}(z)/A(z)$ maps an open neighborhood $U$ of a point
on $\mathcal{C}_{2}$ onto an open set which contains a point $2\Re q+2$,
where $q$ is an $(m+1)$-th root of unity, when $m$ is large. From
\eqref{eq:quadraticT}, there is a solution of $H_{m}(z)$ in $U$.
The density of the roots of $H_{m}(z)$ follows.

\end{proof}

\begin{example}

We consider an example in which the generating function of $H_{m}(z)$
is given by 
\[
\frac{1}{z^{2}t^{2}+(z^{2}-2z+a)t+1}=\sum_{m=0}^{\infty}H_{m}(z)t^{m}
\]
where $a\in\mathbb{R}$ . Let $z=x+iy$. We exhibit the three possible
cases for the root distribution of $H_{m}(z)$ depending on $a$:
\begin{enumerate}
\item If $a\le0$, the roots of $H_{m}(z)$ lie on the two real intervals
defined by 
\[
(x^{2}+a)(x^{2}-4x+a)\le0.
\]

\item If $0<a\le4$, the roots of $H_{m}(z)$ can lie either on the half
circle $x^{2}+y^{2}=a$, $x\ge0$, or on the real interval defined
by $x^{2}-4x+a\le0$.
\item If $a>4$, the roots of $H_{m}(z)$ lie on two parts of the circle
$x^{2}+y^{2}=a$ restricted by $0\le x\le2$.
\end{enumerate}
Indeed, by complex expansion, we have
\[
\Im\frac{B^{2}(z)}{A(z)}=\frac{2y(x^{2}+y^{2}-a)P}{(x^{2}+y^{2})^{2}}\qquad\mbox{and}\qquad\mbox{\ensuremath{\Re}}\frac{B^{2}(z)}{A(z)}=\frac{P^{2}-Q^{2}}{(x^{2}+y^{2})^{2}},
\]
where
\[
P=ax-2x^{2}+x^{3}-2y^{2}+xy^{2}\qquad\mbox{and}\qquad Q=y(x^{2}+y^{2}-a).
\]

Theorem \ref{quadratic} yields three cases: $y=0$, $x^{2}+y^{2}-a=0$
or $P=0$. Since $\Re\left(B^{2}(z)/A(z)\right)\ge0$, all these cases
give $Q=0$. We note that if $x^{2}+y^{2}-a=0$ then the condition
$\Re\left(B^{2}(z)/A(z)\right)\le4$ reduces to 
\begin{equation}
x(a+x^{2}+y^{2})(ax-4x^{2}+x^{3}-4y^{2}+xy^{2})=4a^{2}x(x-2)\le0.\label{eq:circle}
\end{equation}
Suppose $a\le0$. Then the condition $Q=0$ implies that the roots
of $H_{m}(z)$ are real. The condition $\Re\left(B^{2}(z)/A(z)\right)\le4$
becomes 
\begin{equation}
(x^{3}-2x^{2}+ax)^{2}-4x^{4}=x^{2}(x^{2}+a)(x^{2}-4x+a)\le0.\label{eq:realline}
\end{equation}
Suppose $0<a\le4$. The roots of $H_{m}(z)$ lie either on the half
circle $x^{2}+y^{2}-a=0$, $x\ge0$ (from the inequality \eqref{eq:circle}),
or on the real interval given by $x^{2}-4x+a\le0$ (from the inequality
\eqref{eq:realline}). If $a>4$ then the roots of $H_{m}(z)$ lie
on the two parts of the circle $x^{2}+y^{2}-a=0$ restricted by $0\le x\le2$
(from the inequality \eqref{eq:circle}).

We notice that in this example, the inequality $\Re\left(B^{2}(z)/A(z)\right)\le4$
gives the endpoints of the curves where the roots of $H_{m}(z)$ lie.
Thus, these endpoints are roots of $\mathrm{Disc}_{t}(1+B(z)t+A(z)t^{2})=B^{2}(z)-4A(z)$.
Moreover the critical values of $a$, which are $0$ and $4$, are
roots of the double discriminant of the denominator
\[
\mathrm{Disc}_{z}\mathrm{Disc}_{t}(1+(z^{2}-2z+a)t+z^{2}t^{2})=4096a^{3}(a-4).
\]
This comes from the fact that the endpoints of the fixed curves containing
the roots of $H_{m}(z)$ are the roots of $\mathrm{Disc}_{t}(1+(z^{2}-2z+a)t+z^{2}t^{2})$.
When this discriminant has a double root as a polynomial in $z$,
some two endpoints of the fixed curves coincide. That explains the
change in the shape of the root distribution.

\end{example}

\section{The cubic denominator}

In this section we show that in the cubic case $D(t,z)=1+B(z)t+A(z)t^{3}$,
the roots of $H_{m}(z)$ lie on a portion of a real algebraic curve.
As we see in the proof of Theorem \ref{quadratic}, we can first consider
the distribution of the quotients of roots $q=t_{i}/t_{j}$ of $D(t,z)$,
and then we can relate to the root distribution of $H_{m}(z)$ using
the $q$-discriminant. While in the previous section this quotient
lies on the unit circle, in this section we show that this quotient
lie on the curve in Figure 1. 

\begin{lemma}\label{cubicqlemma}

Suppose $\zeta_{1},\zeta_{2}\ne0$ are complex numbers such that $1/\zeta_{1}+1/\zeta_{2}+1=0$
and 
\begin{equation}
\frac{\zeta_{1}^{m+1}-1}{\zeta_{1}-1}=\frac{\zeta_{2}^{m+1}-1}{\zeta_{2}-1}.\label{eq:cubic-q}
\end{equation}
Then $\zeta_{1}$ and $\zeta_{2}$ lie on the union $C_{1}\cup C_{2}\cup C_{3}$
where the Cartesian equations of $C_{1}$, $C_{2}$ and $C_{3}$ are
given by
\begin{eqnarray*}
C_{1} & : & (x+1)^{2}+y^{2}=1,x\le-\frac{1}{2},\\
C_{2} & : & x=-\frac{1}{2},-\frac{\sqrt{3}}{2}\le y\le\frac{\sqrt{3}}{2},\\
C_{3} & : & x^{2}+y^{2}=1,x\ge-\frac{1}{2},
\end{eqnarray*}
and are dense there as $m\rightarrow\infty$. 

\end{lemma}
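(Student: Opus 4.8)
The plan is to remove the constraint (i) geometrically, reduce (ii) to a single polynomial condition, and then treat containment and density separately.

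First I would rewrite the relation $1/\zeta_1+1/\zeta_2+1=0$ as $(1+\zeta_1)(1+\zeta_2)=1$, so that $\zeta_2=T(\zeta_1)$ with $T(\zeta)=-\zeta/(1+\zeta)$ a M\"obius involution fixing $0$ and $-2$. I would then record the three facts I need: $T$ carries the unit circle $|\zeta|=1$ onto the line $\Re\zeta=-\tfrac12$, it preserves the circle $|\zeta+1|=1$, and the three arcs $C_1,C_2,C_3$ all pass through the common points $-\tfrac12\pm\tfrac{\sqrt3}{2}i=e^{\pm2\pi i/3}$. In particular $T$ interchanges $C_2$ and $C_3$ and fixes $C_1$ (on which $\zeta_2=\bar\zeta_1$), so it suffices to locate $\zeta_1$; the partner $\zeta_2$ then lies on the union automatically.

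Next I would reduce (ii). With $g(\zeta)=(\zeta^{m+1}-1)/(\zeta-1)=1+\zeta+\cdots+\zeta^m$, condition (ii) reads $g(\zeta_1)=g(\zeta_2)$. For $\zeta_1\ne\zeta_2$ I divide by $\zeta_1-\zeta_2$: setting $p=\zeta_1+\zeta_2$ and using (i) to get $\zeta_1\zeta_2=-p$ (so $\zeta_1,\zeta_2$ are the roots of $X^2-pX-p$), the divided difference $S_m(p)=\sum_{k=1}^m(\zeta_1^k-\zeta_2^k)/(\zeta_1-\zeta_2)$ is a real polynomial of degree $m-1$ in $p$. Thus (i)+(ii) is equivalent to $S_m(p)=0$, together with the degenerate case $\zeta_1=\zeta_2=-2$ (which lies on $C_1$). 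Under $\{\zeta_1,\zeta_2\}\mapsto p$ the arc $C_1$ maps to the real segment $[-4,-1]$ and $C_2\cup C_3$ maps to the loop $2|p|\cos(\tfrac13\arg p)=1$, i.e. $p=e^{3i\theta/2}/(2\cos(\theta/2))$, $\theta\in[-\tfrac{2\pi}{3},\tfrac{2\pi}{3}]$; I call their union $\mathcal{A}$. Containment is then the statement that every root of $S_m$ lies in $\mathcal{A}$.

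The hard part will be this last exactness. Writing $\zeta_i=t_3/t_i$ for the roots $t_1,t_2,t_3$ (with $t_1+t_2+t_3=0$) of the associated cubic, membership in $\mathcal{A}$ says precisely that two of $|t_1|,|t_2|,|t_3|$ coincide. Asymptotically this is the Beraha--Kahane--Weiss/Boyer--Goh balance (see \cite{bkw},\cite{boyergoh}): $g(\zeta_1)=g(\zeta_2)$ can hold in the limit only when the two dominant terms have equal modulus. But I need exactness for every finite $m$, and making the dominance heuristic rigorous is the main obstacle. I would close it by a counting argument: since $\deg S_m=m-1$, it suffices to produce $m-1$ zeros on $\mathcal{A}$. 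On $C_1$ the condition collapses to the single real equation $\Im\,g(\zeta_1)=0$ (because $\zeta_2=\bar\zeta_1$ there), whose sign changes of $S_m$ on $[-4,-1]$ I can count; the remaining, non-real zeros occur in conjugate pairs, and I would confine them to the loop by showing $S_m$ has no zeros in the two open regions cut out by $\mathcal{A}$, through a modulus/argument-principle estimate that rules out a strictly dominant $|t_i|$. Matching the total against $m-1$ forces all zeros onto $\mathcal{A}$.

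Finally, for density I would mirror the mechanism of Theorem \ref{quadratic}. On the open arc $C_1$ one has $|\zeta_1|>1$, so $\arg g(\zeta_1)=(m+1)\arg\zeta_1-\arg(\zeta_1-1)+o(1)$ winds $\Theta(m)$ times as $\zeta_1$ traverses $C_1$; hence $\Im\,g=0$ at $\Theta(m)$ points, with consecutive solutions $O(1/m)$ apart. On $C_2\cup C_3$ the total number of admissible $p$ is again $\Theta(m)$ (there are $m-1$ roots of $S_m$), and an equidistribution estimate for these roots along the loop gives spacing $O(1/m)$. Letting $m\to\infty$ yields density on all of $C_1\cup C_2\cup C_3$.
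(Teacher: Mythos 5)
Your reduction is sound and in places cleaner than the paper's own setup: the involution $T(\zeta)=-\zeta/(1+\zeta)$ and its action on the three arcs, the passage to the symmetrized polynomial $S_{m}(p)$ of degree $m-1$ via $\zeta_{1}\zeta_{2}=-p$, and the images $[-4,-1]$ and $2|p|\cos\left(\frac{1}{3}\arg p\right)=1$ of the arcs in the $p$-plane are all correct; moreover the paper's proof, like yours, is a counting argument against a degree bound with an Intermediate Value Theorem count on $C_{1}$. But there is a genuine gap exactly where you flag ``the main obstacle,'' and your proposed repair does not close it. The sign-change count on $[-4,-1]$ can only ever produce roughly $(m-1)/3$ zeros: each real zero of $S_{m}$ corresponds to a conjugate pair $\{\zeta,\bar{\zeta}\}\subset C_{1}$, and the IVT yields about $2m/3$ such points $\zeta$, hence about $m/3$ values of $p$. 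So matching against $\deg S_{m}=m-1$ requires producing the remaining $\sim 2(m-1)/3$ zeros \emph{on the loop}, and for these you offer only ``a modulus/argument-principle estimate that rules out a strictly dominant $|t_{i}|$.'' This fails on two counts: (a) proving that $S_{m}$ has no zeros in the regions cut out by $\mathcal{A}$ \emph{is} the containment statement, so as described the step is circular; and (b) dominance/modulus estimates of Beraha--Kahane--Weiss type can only confine zeros to an $O(1/m)$-neighborhood of the balance curve, never exactly onto it for fixed finite $m$ --- which is precisely the objection you yourself raised one sentence earlier. (Note also that if such an exclusion argument existed, the final ``matching the total against $m-1$'' would be redundant.)

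The missing idea, which is the heart of the paper's proof, is an algebraic symmetry of the root set: if $\zeta$ is a root of \eqref{eq:cubic-q} (paired with $-\zeta/(1+\zeta)$), then so is $-1-\zeta$. The paper proves this by clearing denominators, factoring out $(\zeta+2)$, and rearranging a double sum so that the resulting expression is manifestly symmetric in $\zeta$ and $-1-\zeta$. Together with the trivial symmetry $\zeta\mapsto T(\zeta)$ this generates an $S_{3}$-action whose generic orbit meets each of $C_{1},C_{2},C_{3}$ in two points; in your coordinates it is a degree-two correspondence sending each real zero $p=\zeta+\bar{\zeta}\in[-4,-1]$ of $S_{m}$ to the conjugate pair of zeros $-2-\zeta-1/\zeta$ and $-2-\bar{\zeta}-1/\bar{\zeta}$ on your loop. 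This is the mechanism that transports the $\sim(m-1)/3$ real zeros you can count by sign changes into the $\sim 2(m-1)/3$ loop zeros you cannot, closing the count against the degree; it also yields density on $C_{2}\cup C_{3}$ for free as the continuous image of density on $C_{1}$, whereas your appeal to ``an equidistribution estimate'' for the loop zeros has no supporting mechanism. Without this symmetry (or a substitute for it), your argument establishes containment and density only for the real zeros, not the lemma.
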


\begin{figure}[H]
\begin{centering}
\includegraphics{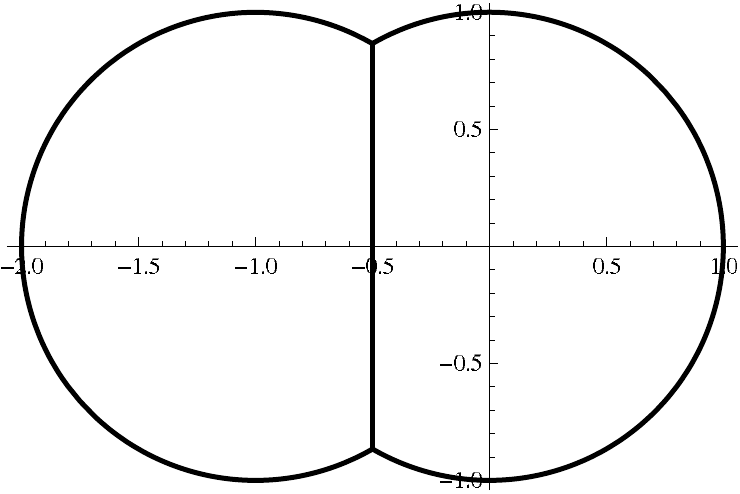}
\par\end{centering}

\caption{Distribution of the quotients of the roots of the cubic denominator}
\label{quotientcubicfig}
\end{figure}

\begin{proof}

We can rewrite \eqref{eq:cubic-q} as 
\[
\sum_{k=0}^{m}\zeta_{1}^{k}=\sum_{k=0}^{m}\zeta_{2}^{k}
\]
where we can replace $\zeta_{2}$ by $-\zeta_{1}/(\zeta_{1}+1)$.
By multiplying both sides by $(\zeta_{1}+1)^{m}$, we note that there
are at most $2m-2$ solutions $\zeta=\zeta_{1}\ne0,-2$ counting multiplicity.
Let $m=3n+k$ where $k=1,2,3$. From implicit differentiation, we
can check that the equation \eqref{eq:cubic-q} has roots at $e^{2\pi i/3},e^{4\pi i/3}$
with multiplicity $k-1$. After subtracting this number of roots from
$2m-2$, we conclude that there are at most $6n$ roots $\zeta\ne0,-2,e^{2\pi i/3},e^{4\pi i/3}$.
We first show that if $\zeta\ne-2$ is a root, then so is $-\zeta-1$.
From the two equations in the hypothesis, we note that $\zeta\ne0,-1$
and 
\[
\sum_{k=0}^{m}\zeta^{k}=\sum_{k=0}^{m}\left(-\frac{\zeta}{\zeta+1}\right)^{k}.
\]
Subtracting $1$, then dividing by $\zeta$ and multiplying both sides
by $(\zeta+1)^{m}$ , we obtain
\begin{eqnarray*}
0 & = & \sum_{k=0}^{m-1}\zeta^{k}(\zeta+1)^{m}+\sum_{k=0}^{m-1}(\zeta+1)^{m-k-1}(-\zeta)^{k}\\
 & = & \sum_{k=0}^{m-1}\zeta^{k}(\zeta+1)^{m-k-1}\left((\zeta+1)^{k+1}-(-1)^{k+1}\right)\\
 & = & (\zeta+2)\sum_{k=0}^{m-1}\zeta^{k}(\zeta+1)^{m-k-1}\sum_{i=0}^{k}(\zeta+1)^{k-i}(-1)^{i}\\
 & = & (\zeta+2)\sum_{k=0}^{m-1}\sum_{i=0}^{k}\zeta^{k}(-\zeta-1)^{m-1-i}.
\end{eqnarray*}
By interchanging the summation and reversing the index of summation
we obtain
\begin{eqnarray*}
\sum_{k=0}^{m-1}\sum_{i=0}^{k}\zeta^{k}(-\zeta-1)^{m-1-i} & = & \sum_{i=0}^{m-1}\sum_{k=i}^{m-1}\zeta^{k}(-\zeta-1)^{m-1-i}\\
 & = & \sum_{i=0}^{m-1}\sum_{k=0}^{i}\zeta^{m-1-k}(-\zeta-1)^{i}.
\end{eqnarray*}
Hence we have symmetry between $\zeta$ and $-1-\zeta$ in the two
double summations.

Our goal is to show that the number of roots $\zeta\ne0,-2,e^{2\pi i/3},e^{4\pi i/3}$
on $C_{1}\cup C_{2}\cup C_{3}$ is at least $6n$, counting multiplicities.
Then all roots will lie on $C_{1}\cup C_{2}\cup C_{3}$ since we have
at most $6n$ roots $\zeta\ne0,-2,e^{2\pi i/3},e^{4\pi i/3}$. By
the symmetry of roots mentioned above, if $\zeta\ne-2$ is a solution
in $C_{1}$ then $(-1-1/\zeta,-\zeta-1)$ is a solution in $C_{2}\times C_{3}$.
Hence there is a bijection between roots in $C_{1}\backslash\{-2\}$,
$C_{2}$ and $C_{3}$. Thus if $C_{1}\backslash\{e^{2\pi i/3},e^{4\pi i/3}\}$
contains at least $2n+1$ roots then all of the roots lie on $C_{1}\cup C_{2}\cup C_{3}$.
Let $\zeta=\zeta_{1}$ be a root on $C_{1}\backslash\{e^{2\pi i/3},e^{4\pi i/3}\}$.
Then the equation $1/\zeta_{1}+1/\zeta_{2}+1=0$ gives $\zeta_{2}=\bar{\zeta}$.
Thus \eqref{eq:cubic-q} gives 
\[
\Im\frac{\zeta^{m+1}-1}{\zeta-1}=0.
\]
Write $\zeta=re^{i\theta}$ where $r=-2\cos\theta$, $\cos\theta\le-1/2$.
Then complex expansion yields
\[
r^{m+2}\sin m\theta-r^{m+1}\sin(m+1)\theta+r\sin\theta=0.
\]
Divide $r$, replace $r$ by $-2\cos\theta$ and combine the first
two terms to obtain
\begin{eqnarray*}
0 & = & (-1)^{m+1}2^{m}\cos^{m}\theta\left(2\sin m\theta\cos\theta+\sin(m+1)\theta\right)+\sin\theta\\
 & = & (-1)^{m+1}2^{m}\cos^{m}\theta\left(2\sin(m+1)\theta-2\cos m\theta\sin\theta+\sin(m+1)\theta\right)+\sin\theta\\
 & = & (-1)^{m+1}2^{m}\cos^{m}\theta\left(2\sin(m+1)\theta+\sin m\theta\cos\theta-\cos m\theta\sin\theta\right)+\sin\theta\\
 & = & (-1)^{m+1}2^{m}\cos^{m}\theta\left(2\sin(m+1)\theta+\sin(m-1)\theta\right)+\sin\theta.
\end{eqnarray*}
We note that the right side has different signs if $\sin(m+1)\theta=1$
and $\sin(m+1)\theta=-1$. Thus we can apply the Intermediate Value
Theorem on several intervals whose boundaries are the solutions of
$\sin(m+1)\theta=\pm1$. The equations $\sin(m+1)\theta=\pm1$ give
\[
(m+1)\theta=\pm\frac{\pi}{2}+2j\pi.
\]
The condition $2\pi/3<\theta<4\pi/3$ and the fact that $m=3n+k$,
$k=1,2,3$, yield
\[
n+\frac{k+1}{3}\pm\frac{1}{4}<j<2n+\frac{2(k+1)}{3}\pm\frac{1}{4}.
\]
If $k=1$, we have at least $2n+1$ roots coming from $2n+1$ intervals
formed by the $2n+2$ points 
\[
\frac{2j\pi\pm\pi/2}{m+1},
\]
where $n<j\le2n+1$. If $k=2$, we have at least $2n+1$ roots coming
from $2n+1$ intervals formed by the $2n+2$ points
\[
\left\{ \frac{2j-\pi/2}{m+1}:n+1\le j<2n+2\right\} \cup\left\{ \frac{2j+\pi/2}{m+1}:n+1<j\le2n+2\right\} .
\]
If $k=3$, we have at least $2n+1$ roots coming from $2n+1$ intervals
formed by the $2n+2$ points 
\[
\frac{2j\pi\pm\pi/2}{m+1},
\]
where $n+1<j<2n+2$. The density follows from the distribution of
$2n+1$ roots mentioned above. The lemma follows.

\end{proof}

\begin{theorem}\label{cubic}

Let $H_{m}(z)$ be a sequence of polynomials whose generating function
is
\[
\sum H_{m}(z)t^{m}=\frac{1}{1+B(z)t+A(z)t^{3}}
\]
where $A(z)$ and $B(z)$ are polynomials in $z$ with complex coefficients.
The roots of $H_{m}(z)$ which satisfy $A(z)\ne0$ lie on the curve
$\mathcal{C}_{3}$ defined by
\[
\Im\frac{B^{3}(z)}{A(z)}=0\qquad\mbox{and}\qquad0\le-\Re\frac{B^{3}(z)}{A(z)}\le\frac{3^{3}}{2^{2}},
\]
and are dense there as $m\rightarrow\infty$.

\end{theorem}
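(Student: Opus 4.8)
The plan is to follow the template of Theorem~\ref{quadratic}: first pass from a root $z_0$ of $H_m$ to the ratios of the roots of $D(t,z_0)$ in $t$, then invoke Lemma~\ref{cubicqlemma} to locate those ratios, and finally translate the location of the ratios into the stated condition on $B^3/A$. Fix a root $z_0$ with $A(z_0)\ne 0$ and let $t_1,t_2,t_3$ be the roots of the cubic $D(t,z_0)=1+B(z_0)t+A(z_0)t^3$. Since $D$ has no $t^2$ term and nonzero constant term, $t_1+t_2+t_3=0$ and $t_1t_2t_3=-1/A(z_0)\ne 0$, so all three roots are nonzero. Expanding $1/D(t,z_0)$ by partial fractions exactly as in \eqref{eq:quadraticT} and reading off the coefficient of $t^m$, the condition $H_m(z_0)=0$ becomes
\[
\sum_{j=1}^{3}\frac{t_j^{-m-1}}{\prod_{i\ne j}(t_i-t_j)}=0.
\]
I would then introduce the quotients $\zeta_1=t_3/t_1$ and $\zeta_2=t_3/t_2$; the relation $t_1+t_2+t_3=0$ is precisely $1/\zeta_1+1/\zeta_2+1=0$, matching the hypothesis of Lemma~\ref{cubicqlemma}.

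Next I would rewrite the vanishing condition in the two variables $\zeta_1,\zeta_2$. Substituting $t_j=t_3/\zeta_j$ and clearing the common factor $t_3^{-m-3}\zeta_1\zeta_2\zeta_3$ (here $\zeta_3=1$), the displayed sum becomes the divided difference of $x^{m+2}$ at the nodes $\zeta_1,\zeta_2,1$; combining the three terms over a common denominator shows that, whenever the roots are distinct,
\[
H_m(z_0)=0 \iff \frac{\zeta_1^{m+2}-1}{\zeta_1-1}=\frac{\zeta_2^{m+2}-1}{\zeta_2-1}.
\]
This is exactly \eqref{eq:cubic-q} with $m$ replaced by $m+1$, so Lemma~\ref{cubicqlemma} (applied with this index shift) places $\zeta_1,\zeta_2$ on $C_1\cup C_2\cup C_3$. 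The degenerate cases in which two of the $t_j$ coincide must be treated separately; they are exactly the zeros of $\mathrm{Disc}_tD(t,z_0)=-A(z_0)\bigl(4B^3(z_0)+27A(z_0)\bigr)$, i.e. the point $-B^3(z_0)/A(z_0)=27/4$, which is an endpoint of $\mathcal C_3$, so $z_0$ still lies on the curve.

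Finally I would convert the location of $\zeta=\zeta_1$ into the statement about $B^3/A$. Writing $e_2,e_3$ for the elementary symmetric functions of the $t_j$, one has $B/A=e_2$ and $-1/A=e_3$, hence $B^3/A=e_2^3/e_3^2$, a degree-$0$ symmetric function of the roots; eliminating $\zeta_2$ via $1/\zeta_1+1/\zeta_2+1=0$ gives
\[
\frac{B^3(z_0)}{A(z_0)}=-\frac{(\zeta^2+\zeta+1)^3}{\zeta^2(\zeta+1)^2}.
\]
The core of the argument is to check that the right-hand side is real and lies in $[-27/4,0]$ on each of the three arcs. On $C_3$ ($\zeta=e^{i\phi}$, $\cos\phi\ge-\tfrac12$) it simplifies to $-(1+2\cos\phi)^3/\bigl(2(1+\cos\phi)\bigr)$, which increases monotonically from $0$ to $27/4$; on $C_2$ ($\zeta=-\tfrac12+iy$) one uses $\zeta(\zeta+1)=-|\zeta|^2$ to reduce it to $(3/4-y^2)^3/(1/4+y^2)^2\in[0,27/4]$; and on $C_1$ it follows from the invariance of the right-hand side under $\zeta\mapsto-1-\zeta$, since $C_1=-1-C_3$. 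This yields $\Im(B^3/A)=0$ and $0\le-\Re(B^3/A)\le 27/4$, i.e. $z_0\in\mathcal C_3$. Density is then obtained as in Theorem~\ref{quadratic}: near a generic point of $\mathcal C_3$ the map $z\mapsto B^3(z)/A(z)$ is an open map onto a neighborhood of the corresponding point of $[-27/4,0]$, and by the density asserted in Lemma~\ref{cubicqlemma} the values $-B^3/A$ attained at roots of $H_m$ become dense in $[0,27/4]$ as $m\to\infty$, producing roots of $H_m$ in every neighborhood. The main obstacle I anticipate is the explicit reality-and-bound verification on the three arcs, together with the careful handling of the degenerate (repeated-root) configurations and the index shift.
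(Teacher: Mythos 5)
Your proposal is correct and follows essentially the same path as the paper's proof: partial fractions reduce $H_m(z_0)=0$ to the two-term identity of Lemma \ref{cubicqlemma} for the quotients of roots (the paper works with $H_{m-1}$ so the exponent is exactly $m+1$, versus your index shift), the formula $B^3/A=-(1+\zeta+\zeta^2)^3/\bigl(\zeta^2(1+\zeta)^2\bigr)$ (which the paper obtains from the $q$-discriminant rather than from symmetric functions) transfers the lemma's curves to $\mathcal{C}_3$, and density follows as in the quadratic case. Your more explicit per-arc verification of the range $[-27/4,0]$ and the handling of the repeated-root case as the endpoint $-B^3/A=27/4$ match the paper's (terser) symmetry argument.
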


\begin{proof}

For a little simplification, we consider the roots of $H_{m-1}(z)$.
Let $z_{0}$ be a root of $H_{m-1}(z)$ which satisfies $A(z_{0})\ne0$.
Let $t_{1}=t_{1}(z_{0})$, $t_{2}=t_{2}(z_{0})$ and $t_{3}=t_{3}(z_{0})$
be the roots of $D(t,z_{0})=1+B(z_{0})t+A(z_{0})t^{3}$. It suffices
to consider $\mathrm{Disc}_{t}(D(t,z_{0}))=-4A(z_{0})B^{3}(z_{0})-27A^{2}(z_{0})\ne0$.
By partial fractions, the function $1/D(t,z_{0})$ is
\[
\frac{1}{A(z_{0})(t_{1}-t_{2})(t_{1}-t_{3})(t-t_{1})}+\frac{1}{A(z_{0})(t_{2}-t_{1})(t_{2}-t_{3})(t-t_{2})}+\frac{1}{A(z_{0})(t_{3}-t_{1})(t_{3}-t_{2})(t-t_{3})}.
\]
We expand $1/(t-t_{i})$ using geometric series and write the expression
above as 
\[
\sum_{m=1}^{\infty}\frac{t_{1}^{m+1}t_{2}^{m}-t_{1}^{m}t_{2}^{m+1}-t_{1}^{m+1}t_{3}^{m}+t_{2}^{m+1}t_{3}^{m}+t_{1}^{m}t_{3}^{m+1}-t_{2}^{m}t_{3}^{m+1}}{A(z_{0})t_{1}^{m}t_{2}^{m}t_{3}^{m}(t_{1}-t_{2})(t_{1}-t_{3})(t_{2}-t_{3})}t^{m-1}.
\]
Since $z_{0}$ is a root of $H_{m-1}(z)$, we have 
\[
t_{1}^{m+1}t_{2}^{m}-t_{1}^{m}t_{2}^{m+1}-t_{1}^{m+1}t_{3}^{m}+t_{2}^{m+1}t_{3}^{m}+t_{1}^{m}t_{3}^{m+1}-t_{2}^{m}t_{3}^{m+1}=0.
\]
We divide this equation by $t_{3}^{2m+1}$ and let $q=q_{1}=t_{1}/t_{3}$,
$q_{2}=t_{2}/t_{3}$ to obtain 
\[
q_{1}^{m+1}q_{2}^{m}-q_{1}^{m}q_{2}^{m+1}-q_{1}^{m+1}+q_{2}^{m+1}+q_{1}^{m}-q_{2}^{m}=0
\]
 where $q_{1}+q_{2}+1=0$ since $t_{1}+t_{2}+t_{2}=0$. The equation
can be written as 
\[
q_{1}^{m}q_{2}^{m}(q_{1}-q_{2})-q_{1}^{m}(q_{1}-1)+q_{2}^{m}(q_{2}-1)=0.
\]
Since $q_{1}^{m}q_{2}^{m}(q_{1}-q_{2})=q_{1}^{m}q_{2}^{m}(q_{1}-1)-q_{1}^{m}q_{2}^{m}(q_{2}-1)$
and $q_{1},q_{2}\ne0,1$, this equation becomes
\[
\frac{q_{1}^{m}-1}{q_{1}^{m}(q_{1}-1)}=\frac{q_{2}^{m}-1}{q_{2}^{m}(q_{2}-1)}.
\]
Let $\zeta_{1}=1/q_{1}$ and $\zeta_{2}=1/q_{2}$ and add 1 to both
sides. Then 
\[
\frac{\zeta_{1}^{m+1}-1}{\zeta_{1}-1}=\frac{\zeta_{2}^{m+1}-1}{\zeta_{2}-1}.
\]
Thus $\zeta_{1}$ and $\zeta_{2}$ (and also $q_{1}$ and $q_{2}$)
lie on the curve given in Lemma \ref{cubicqlemma}. Since $q_{1}$
and $q_{2}$ are given by quotients of two roots, they are roots of
the $q$-discriminant given by
\[
\mbox{Disc}_{t}(D(t,z_{0});q)=-B^{3}(z_{0})A(z_{0})q^{2}(1+q)^{2}-A^{2}(z_{0})(1+q+q^{2})^{3}.
\]
This gives 
\[
\frac{B^{3}(z_{0})}{A(z_{0})}=-\frac{(1+q+q^{2})^{3}}{q^{2}(1+q)^{2}}.
\]
It remains to show that the map
\[
f(q)=-\frac{(1+q+q^{2})^{3}}{q^{2}(1+q)^{2}}
\]
maps the curve in Figure \ref{quotientcubicfig} to the real interval
$[-27/4,0]$. Let $q$ be a point on the this curve. We note that
\[
f(q)=f(-1-q)=-\frac{(q^{-1}+1+q)^{3}}{q^{-1}+2+q}.
\]
Since $q$ lies on the curve in Figure \ref{quotientcubicfig}, we
have the three possible cases $\bar{q}=-1-q$, $|q|=1$ or $|-1-q|=1$.
In the first case, $\Im f(q)=0$ since $f(q)=\overline{f(q)}$. In
the second and third cases, $\Im f(q)=0$ since $q+q^{-1}\in\mathbb{R}$
and $f(q)=f(-1-q)$. Furthermore, $f(q)$ attains its minimum and
maximum when $q=1$ and $q=e^{2\pi i/3}$ respectively. The density
of the roots of $H_{m}(z)$ follows from similar arguments as in the
proof of Theorem \ref{quadratic}. 

\end{proof}

\section{The quartic denominator}

In this section, we will show that in the case $D(t,z)=1+B(z)+A(z)t^{4}$
the roots of $H_{m}(z)$ lie on a portion of a real algebraic curve.
Similar to the approach in the previous sections, we first consider
the distribution of the quotients of roots of $D(t,z)$. Before looking
at these quotients, let us recall that the Chebyshev polynomial of
the second kind $U_{m}(z)$ is 
\[
U_{m}(z)=\frac{\sin(m+1)\theta}{\sin\theta}
\]
where 
\[
z=\cos\theta.
\]
Suppose $z_{1},z_{2}\in\mathbb{C}$ such that $|z_{1}|=|z_{2}|$.
Let $e^{2i\theta}=z_{1}/z_{2}$ and $z=\cos\theta$. If $k$ is a
positive integer then 
\begin{eqnarray}
\frac{z_{1}^{k}-z_{2}^{k}}{z_{1}-z_{2}} & = & (z_{1}z_{2})^{(k-1)/2}\frac{(z_{1}/z_{2})^{m/2}-(z_{2}/z_{1})^{m/2}}{(z_{1}/z_{2})^{1/2}-(z_{2}/z_{1})^{1/2}}\nonumber \\
 & = & (z_{1}z_{2})^{(k-1)/2}U_{m}(z).\label{eq:ChebyshevU}
\end{eqnarray}
By analytic continuation, we can extend this identity to any pair
of complex numbers $z_{1}$ and $z_{2}$ with 
\[
2z=\left(\frac{z_{1}}{z_{2}}\right)^{1/2}+\left(\frac{z_{2}}{z_{1}}\right)^{1/2}.
\]

\begin{lemma}\label{quarticqlemma}

Suppose $z_{0}$ is a root of $H_{m}(z)$ and $q=q(z_{0})$ is a quotient
of two roots in $t$ of $1+B(z_{0})t+A(z_{0})t^{4}$. Then the set
of all such quotients belongs to the curve depicted in Figure 2, where
the Cartesian equation of the quartic curve on the left is 
\[
1+2x+2x^{2}+2x^{3}+x^{4}-2y^{2}+2xy^{2}+2x^{2}y^{2}+y^{4}=0,
\]
and the curve on the right is the unit circle with real part at least
$-1/3$. All such quotients are dense on this curve as $m\rightarrow\infty$.

\end{lemma}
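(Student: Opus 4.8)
The plan is to mirror the two–stage strategy already used for the cubic denominator: first convert the vanishing of $H_m(z_0)$ into a single algebraic equation in the quotients of the roots in $t$ of $D(t,z_0)=1+B(z_0)t+A(z_0)t^4$, and then carry out a root–count together with an Intermediate Value Theorem argument that pins every admissible quotient onto the curve of Figure 2 and yields density. Throughout I assume $\mathrm{Disc}_t D(t,z_0)\neq 0$, the degenerate case contributing only endpoints of the curve, exactly as in the proofs of Theorem \ref{quadratic} and Theorem \ref{cubic}.

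First I would set up the reduction. Writing $t_1,\dots,t_4$ for the roots, partial fractions gives $1/D(t,z_0)=\sum_i 1/(A(z_0)(t-t_i)\prod_{j\neq i}(t_i-t_j))$, and expanding each $1/(t-t_i)$ as a geometric series identifies the coefficient of $t^m$; setting it to zero yields $\sum_i t_i^{-(m+1)}/\prod_{j\neq i}(t_i-t_j)=0$. Because $D$ carries no $t^2$ or $t^3$ term, the roots satisfy $\sum_i t_i=0$ and $\sum_{i<j}t_it_j=0$. Dividing the vanishing sum through by a fixed root and writing $q_i=t_i/t_4$ (so $q_4=1$), these become $q_1+q_2+q_3+1=0$ and $\sum_{1\le i<j\le 4}q_iq_j=0$, two relations that cut the three quotients down to a one–complex–parameter family, so $H_m=0$ becomes one explicit equation in that parameter. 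The natural way to make this tractable is to group the four roots into two pairs and apply the Chebyshev identity \eqref{eq:ChebyshevU}, replacing each pairwise power difference by a $U$–factor; this recasts $H_m=0$ as a relation between two Chebyshev values, the exact analogue of \eqref{eq:cubic-q}. It is worth noting that $H_m$ is also the complete homogeneous symmetric function of degree $m$ in the reciprocal roots $1/t_i$, and that the quotient set is invariant under $q\mapsto 1/q$, a symmetry shared by the curve of Figure 2, so one is free to work with whichever of $t_i$ or $1/t_i$ is more convenient.

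Next I would bound the number of admissible quotients for fixed $m$. Clearing denominators and multiplying through by suitable powers of $q$ and of $1+q+q^2$ (as in the proof of Lemma \ref{cubicqlemma}) turns the equation into a polynomial in the single quotient $q$ whose degree is linear in $m$. I would then remove the spurious roots forced by this construction — the zeros $e^{\pm 2\pi i/3}$ of $1+q+q^2$, which play precisely the role they did in Lemma \ref{cubicqlemma} and which do not lie on either branch of Figure 2 — to obtain a sharp upper bound of the form $cm+O(1)$ for the number of genuine quotients. Finally I would produce at least that many quotients on the curve: following Lemma \ref{cubicqlemma}, on each branch the relations $\sum_i t_i=\sum_{i<j}t_it_j=0$ together with membership on the curve force a conjugation symmetry among the quotient parameters — the analogue of $\zeta_2=\bar\zeta_1$ on $C_1$ — under which the Chebyshev values become real. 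The equation then takes the form $F_m(\theta)=0$ with $F_m$ a real trigonometric polynomial whose top harmonic is $\sin(m+1)\theta$ (via $U_m(\cos\theta)=\sin(m+1)\theta/\sin\theta$); locating its sign changes at the consecutive solutions of $\sin(m+1)\theta=\pm1$ and applying the Intermediate Value Theorem on the resulting $O(m)$ subintervals produces $cm+O(1)$ quotients on the curve. Matching this lower bound against the upper bound forces every admissible quotient onto the two branches of Figure 2, and since the subintervals have length $O(1/m)$ the quotients are dense there as $m\to\infty$.

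I expect the main obstacle to be the reduction step together with making the two counts match. Unlike the cubic case, where the single relation $q_1+q_2+1=0$ collapses everything to one quotient, here the two relations $\sum_i t_i=\sum_{i<j}t_it_j=0$ must be used simultaneously, and it is delicate to choose the pairing and the Chebyshev substitution so that $H_m=0$ becomes a \emph{single} real equation with enough controllable sign changes. In particular the curve has two qualitatively different branches — the quartic $|q+1+q^{-1}|=1$ and the circular arc $|q|=1$, $\Re q\ge-\tfrac13$ — meeting at $q=\pm i$, and one must account correctly for the exceptional roots, for the junction at $\pm i$, and for the endpoints at $\Re q=-\tfrac13$ (coming from $\mathrm{Disc}_t D=0$) so that the Intermediate Value Theorem count on both branches adds up to exactly the degree bound.
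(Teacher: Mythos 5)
Your overall skeleton --- reduce $H_m(z_0)=0$ to a polynomial equation in the quotients $q_i=t_i/t_4$ via the relations $q_1+q_2+q_3+1=0$, $\sum q_iq_j=0$, obtain an upper bound of degree linear in $m$ (the paper gets exactly $3m-1$ after dividing \eqref{eq:quartic-q} by $q_2-q_3$), and then match it with an Intermediate Value Theorem count on the curve --- is the same as the paper's. But two essential pieces are missing or wrong. First, the exceptional roots are misidentified: in the quartic case there are no spurious roots at $e^{\pm 2\pi i/3}$ to remove. The exceptional points are $q_1=1$ and $q_1=\pm i$ (the junctions of the circular arc with the quartic branch), they lie \emph{on} the curve, and their multiplicities in \eqref{eq:quartic-q} depend on $m \bmod 4$ (namely $2,3,0,1$ at $\pm i$ for $m\equiv 0,1,2,3$), computed by implicit differentiation. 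These multiplicities, together with the extra roots $q_2,q_3$ that each circle root generates on the quartic branch, are exactly what make the count close up to $3m-1$; transplanting the cubic lemma's bookkeeping here would leave the two counts unmatched.

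Second, and more seriously, your IVT step assumes the equation on the arc becomes ``a real trigonometric polynomial whose top harmonic is $\sin(m+1)\theta$,'' which is the structure of the \emph{cubic} proof and does not hold here. What the reduction actually produces is the symmetric form \eqref{eq:quartic-real}: a sum of \emph{two} Chebyshev expressions evaluated at two different algebraic functions $\zeta(\theta)$ and $\xi(\theta)$ of the angle (given by \eqref{eq:zetaform} and \eqref{eq:xiform}), each weighted by a factor $(4\zeta^2-1)^{(m-2)/4}$, resp.\ $(4\xi^2-1)^{(m-2)/4}$, that is exponentially small, resp.\ large, in $m$. Sign changes of the whole expression can only be located after proving that the $\xi$-summand \emph{dominates} the $\zeta$-summand at the test points $\sin(m+1)\alpha=\pm 1$; this is the inequality \eqref{eq:quartic-Chevinq}, whose proof needs the uniform bound $\left|3U_{m}(\zeta)+2U_{m-2}(\zeta)+U_{m-4}(\zeta)\right|\le 6\sqrt{2}$ (from $1/3<\zeta^{2}<1/2$), the restriction \eqref{eq:alphaform} forcing $2\cos\theta\ge 4\sin\frac{\pi}{2(m+1)}$, and hence a growth factor $\bigl(1+4\sin\frac{\pi}{2(m+1)}\bigr)^{(m-2)/2}\ge 3$ valid for $m\ge 6$ (small $m$ being checked separately). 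This dominance argument is the technical heart of the quartic case, it is precisely where the quartic lemma departs from Lemma \ref{cubicqlemma}, and your proposal contains no substitute for it; without it the IVT count on the arc, and therefore the entire matching argument, does not go through.
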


\begin{figure}[H]
\begin{centering}
\includegraphics{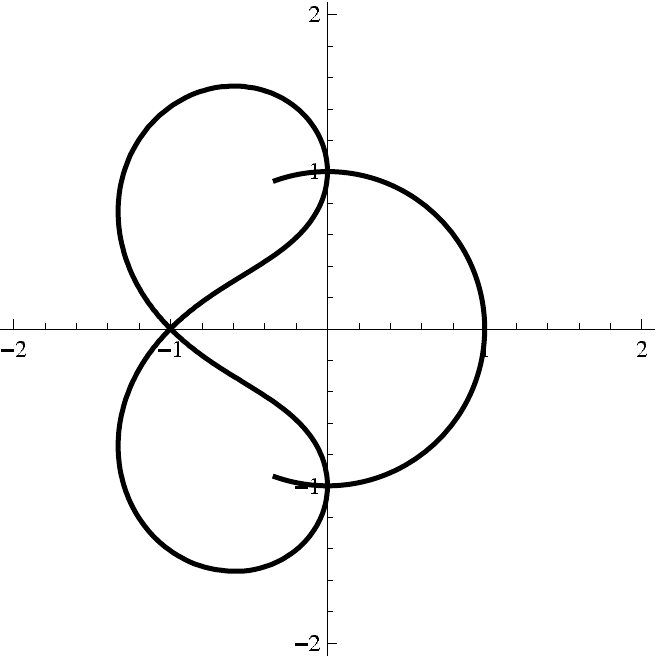}
\par\end{centering}

\caption{Distribution of the quotients of the roots of the quartic denominator}
\label{quotientquarticfig}
\end{figure}

\begin{proof}

For each $z_{0}\in\mathbb{C}$, let $t_{1}=t_{1}(z_{0})$, $t_{2}=t_{2}(z_{0})$,
$t_{3}=t_{3}(z_{0})$, and $t_{4}=t_{4}(z_{0})$ be the roots of the
denominator $1+B(z_{0})t+A(z_{0})t^{4}$. By partial fractions, we
have 
\begin{eqnarray*}
\frac{1}{1+B(z_{0})t+A(z_{0})t^{4}} & = & \frac{1}{A(z_{0})(t-t_{1})(t-t_{2})(t-t_{3})(1-t_{4})}\\
 & = & \sum_{m=0}^{\infty}H_{m}(z_{0})t^{m},
\end{eqnarray*}
where 
\begin{eqnarray*}
A(z_{0})H_{m}(z_{0}) & = & \frac{1}{t_{1}^{m+1}(t_{1}-t_{2})(t_{1}-t_{3})(t_{1}-t_{4})}+\frac{1}{t_{2}^{m+1}(t_{2}-t_{1})(t_{2}-t_{3})(t_{2}-t_{4})}\\
 &  & +\frac{1}{t_{3}^{m+1}(t_{3}-t_{1})(t_{3}-t_{2})(t_{3}-t_{4})}+\frac{1}{t_{4}^{m+1}(t_{4}-t_{1})(t_{4}-t_{2})(t_{4}-t_{3})}.
\end{eqnarray*}
 Let $q_{1}=t_{1}/t_{4}$, $q_{2}=t_{2}/t_{4}$, $q_{3}=t_{3}/t_{4}$.
For a little reduction in the powers of $q_{i}$, $1\le i\le3$, we
will consider the roots of the polynomial $H_{m-2}(z)$. We put all
terms of $A(z_{0})H_{m-2}(z_{0})$ over a common denominator and then
divide the numerator by $t_{4}^{3m}$. The condition $H_{m-2}(z_{0})=0$
implies 
\begin{eqnarray}
0 & = & q_{1}^{m+1}(-q_{2}^{m-1}q_{3}^{m-1}(q_{2}-q_{3})+q_{2}^{m}-q_{3}^{m}-q_{2}^{m-1}+q_{3}^{m-1})\nonumber \\
 &  & +q_{1}^{m}(q_{2}^{m-1}q_{3}^{m-1}(q_{2}^{2}-q_{3}^{2})-q_{2}^{m+1}+q_{3}^{m+1}+q_{2}^{m-1}-q_{3}^{m-1})\nonumber \\
 &  & +q_{1}^{m-1}(-q_{2}^{m}q_{3}^{m}(q_{2}-q_{3})+q_{2}^{m+1}-q_{3}^{m+1}-q_{2}^{m}+q_{3}^{m})\nonumber \\
 &  & +q_{2}^{m-1}q_{3}^{m-1}(q_{2}-q_{3})-q_{2}^{m-1}q_{3}^{m-1}(q_{2}^{2}-q_{3}^{2})+q_{2}^{m}q_{3}^{m}(q_{2}-q_{3}).\label{eq:quartic-q}
\end{eqnarray}
The fact 
\begin{eqnarray*}
t_{1}+t_{2}+t_{3}+t_{4} & = & 0\\
t_{1}t_{2}+t_{1}t_{3}+t_{1}t_{4}+t_{2}t_{3}+t_{2}t_{4}+t_{3}t_{4} & = & 0
\end{eqnarray*}
gives
\[
q_{2}+q_{3}=-1-q_{1}\qquad\mbox{and}\qquad q_{2}q_{3}=q_{1}^{2}+q_{1}+1.
\]

From the symmetric reductions, the right side of \eqref{eq:quartic-q},
after being divided by $q_{2}-q_{3}$, is a polynomial in $q_{1}$
of degree $3m-1$ . We used a computer algebra system to check for
the root distribution of this polynomial in the case $m\le5$. We
now assume that $m\ge6$. We will show that the number of roots $q_{1}$
lying on the two curves in Figure \ref{quotientquarticfig} is at
least $3m-1$. The first step is to show that if the set of $q_{1}$
belongs to the unit circle with $\Re q_{1}\ge-1/3$ and is dense there
as $m\rightarrow\infty$ then the set of $q_{2}$ and $q_{3}$ belongs
to the quartic curve given in the lemma and is dense on this quartic
curve as $m\rightarrow\infty$. Then we will find the number of roots
$q_{1}$ on the unit circle with $\Re q_{1}\ge-1/3$.

Suppose $q_{1}=e^{i\pi\theta}$ lies on the unit circle and $1\ge\cos\theta\ge-1/3$.
We note that $q_{2}$ and $q_{3}$ are the two roots of the equation
\[
f(q):=q^{2}+(1+q_{1})q+q_{1}^{2}+q_{1}+1=0.
\]
Thus the quadratic formula gives
\[
q=\frac{-1-e^{i\theta}\pm ie^{i\theta/2}\sqrt{6\cos\theta+2}}{2}.
\]
Splitting the real and imaginary parts of the function on the left
side, we leave it to the reader to check that this function maps the
interval $1\ge\cos\theta\ge-1/3$ to the quartic curve 
\[
1+2x+2x^{2}+2x^{3}+x^{4}-2y^{2}+2xy^{2}+2x^{2}y^{2}+y^{4}=0.
\]

We now compute the number of roots $q_{1}=e^{i\pi\theta}$ with $\cos\theta\ge-1/3$.
We first consider $q_{1}\ne\pm i,1$. Let 
\[
2\zeta=\left(\frac{q_{2}}{q_{3}}\right)^{1/2}+\left(\frac{q_{3}}{q_{2}}\right)^{1/2}.
\]
Equation \eqref{eq:ChebyshevU} gives 
\begin{eqnarray*}
\frac{q_{2}^{m}-q_{3}^{m}}{q_{2}-q_{3}} & = & (q_{2}q_{3})^{(m-1)/2}U_{m-1}\left(\zeta\right)
\end{eqnarray*}
where
\begin{equation}
\zeta^{2}=\frac{1}{4}\frac{(q_{2}+q_{3})^{2}}{q_{2}q_{3}}=\frac{(q_{1}+1)^{2}}{4(q_{1}^{2}+q_{1}+1)}=\frac{1}{4(2\cos\theta+1)}+\frac{1}{4}\in\mathbb{R}.\label{eq:zetaform}
\end{equation}
We divide \eqref{eq:quartic-q} by $q_{2}-q_{3}$ and rewrite it in
terms of Chebyshev polynomials:

\begin{eqnarray*}
0 & = & U_{m}(\zeta)(-q_{1}^{m}+q_{1}^{m-1})(q_{1}^{2}+q_{1}+1)^{m/2}\\
 &  & +U_{m-1}(\zeta)(q_{1}^{m+1}-q_{1}^{m-1})(q_{1}^{2}+q_{1}+1)^{(m-1)/2}\\
 &  & +U_{m-2}(\zeta)(-q_{1}^{m+1}+q_{1}^{m})(q_{1}^{2}+q_{1}+1)^{(m-2)/2}\\
 &  & +(q_{1}^{2}+q_{1}+1)^{m-1}(-3q_{1}^{m+1}-2q_{1}^{m}-q_{1}^{m-1}+q_{1}^{2}+2q_{1}+3).
\end{eqnarray*}
We divide this equation by $q_{1}^{(3m-2)/2}(1-q_{1})(q_{1}+q_{1}^{-1}+1)^{m/2}$
and write $(q_{1}^{m}-1)/(q_{1}-1)$ in terms of Chebyshev polynomials.
We obtain 
\begin{eqnarray*}
0 & = & U_{m}(\zeta)+2\zeta U_{m-1}(\zeta)+U_{m-2}(\zeta)/(q_{1}+q_{1}^{-1}+1)\\
 &  & +(q_{1}+q_{1}^{-1}+1)^{m/2-1}\left(3U_{m}\left(\xi\right)+2U_{m-2}(\xi)+U_{m-4}(\xi)\right),
\end{eqnarray*}
where
\begin{equation}
\xi^{2}=\frac{(q_{1}+1)^{2}}{4q_{1}}=\frac{2\cos\theta+2}{4}.\label{eq:xiform}
\end{equation}
 Finally, from \eqref{eq:zetaform} we can replace $1/(q_{1}+q_{1}^{-1}+1)$
by $(4\zeta^{2}-1)$ and use the recurrence definition of the Chebyshev
polynomials to rewrite this equation in the symmetric form below:
\begin{eqnarray}
0 & = & (4\zeta^{2}-1)^{(m-2)/4}\left(3U_{m}\left(\zeta\right)+2U_{m-2}(\zeta)+U_{m-4}(\zeta)\right)\nonumber \\
 &  & +(4\xi^{2}-1)^{(m-2)/4}\left(3U_{m}\left(\xi\right)+2U_{m-2}(\xi)+U_{m-4}(\xi)\right).\label{eq:quartic-real}
\end{eqnarray}

From this symmetric form, the right expression remains the same if
we interchange $\zeta$ and $\xi$ or if we interchange $\cos\theta$
and $-\cos\theta/(2\cos\theta+1)$ (from \eqref{eq:zetaform} and
\eqref{eq:xiform}). Thus the numbers of roots $q_{1}$ are the same
in the two cases $0<\cos\theta<1$ and $-1/3<\cos\theta<0$. It is
sufficient to count the number of roots $0<\cos\theta<1$ or $1/2<\xi^{2}<1$.
Let $\cos\alpha=\xi$ and $U_{m}(\xi)=\sin(m+1)\alpha/\sin\alpha$
where $-\pi/4<\alpha<\pi/4$, $\alpha\ne0$. The idea is to show that
in this case the summand 
\begin{equation}
(4\xi^{2}-1)^{(m-2)/4}\left(3U_{m}\left(\xi\right)+2U_{m-2}(\xi)+U_{m-4}(\xi)\right)\label{eq:dominantterm}
\end{equation}
dominates the right expression of \eqref{eq:quartic-real}. Since
$\zeta^{2}$ and $\xi^{2}$ in \eqref{eq:quartic-real} are real numbers
and the Chebyshev polynomials in this equation are either even or
odd, we can apply the Intermediate Value Theorem. We note that \eqref{eq:dominantterm}
has different signs when $\sin(m+1)\alpha=1$ and when $\sin(m+1)\alpha=-1$.
Suppose $\sin(m+1)\alpha=\pm1$ and $-\pi/4<\alpha<\pi/4$. Since
\[
4\zeta^{2}-1=\frac{1}{1+2\cos\theta}<1,
\]
it suffices to show
\begin{equation}
\left|(4\xi^{2}-1)^{(m-2)/2}\left(3U_{m}\left(\xi\right)+2U_{m-2}(\xi)+U_{m-4}(\xi)\right)\right|\ge\left|3U_{m}\left(\zeta\right)+2U_{m-2}(\zeta)+U_{m-4}(\zeta)\right|.\label{eq:quartic-Chevinq}
\end{equation}
Let $\zeta=\cos\beta$. Using the fact that $1/3<\zeta^{2}<1/2$ and
$U_{m}(\zeta)=\sin(m+1)\beta/\sin\beta$, we obtain the following
upper bound for the right hand side of \eqref{eq:quartic-Chevinq}:
\begin{eqnarray*}
\left|3U_{m}\left(\zeta\right)+2U_{m-2}(\zeta)+U_{m-4}(\zeta)\right| & \le & 6\sqrt{2}.
\end{eqnarray*}

Since 
\begin{equation}
\alpha=\frac{\pi}{4}\frac{(4k\pm2)}{m+1}\label{eq:alphaform}
\end{equation}
where $k\in\mathbb{Z}$ and $-\pi/4<\alpha<\pi/4$, we have 
\[
|\alpha|\le\frac{\pi}{4}\left(1-\frac{1}{m+1}\right).
\]
Thus
\[
\cos\alpha\ge\frac{\sqrt{2}}{2}\left(\cos\frac{\pi}{4(m+1)}+\sin\frac{\pi}{4(m+1)}\right).
\]
This inequality and \eqref{eq:xiform} give
\begin{equation}
2\cos\theta=4\cos^{2}\alpha-2\ge4\sin\frac{\pi}{2(m+1)}.\label{eq:cosrestriction}
\end{equation}
From the definition of the Chebyshev polynomial, we have 
\begin{eqnarray*}
U_{m-2}(\xi) & = & \frac{\sin(m-1)\alpha}{\sin\alpha}\\
 & = & \frac{\sin(m+1)\alpha\cos2\alpha-\cos(m+1)\alpha\sin2\alpha}{\sin\alpha}\\
 & = & \frac{\sin(m+1)\alpha\cos2\alpha}{\sin\alpha}.
\end{eqnarray*}
With similar computations for $U_{m-4}(\xi)$, we obtain
\[
|3U_{m}\left(\xi\right)+2U_{m-2}(\xi)+U_{m-4}(\xi)|=\frac{|\sin(m+1)\alpha||3+2\cos2\alpha+\cos4\alpha|}{|\sin\alpha|}.
\]
Since $\sin(m+1)\alpha=\pm1$ and $\cos2\alpha\ge0$, the right side
is at least $2\sqrt{2}$. We combine this with \eqref{eq:cosrestriction}
to have 

\begin{eqnarray*}
\left|(2\cos\theta+1)^{(m-2)/2}\left(3U_{m}\left(\xi\right)+2U_{m-2}(\xi)+U_{m-4}(\xi)\right)\right| & \ge & \left(1+4\sin\frac{\pi}{2(m+1)}\right)^{(m-2)/2}2\sqrt{2}\\
 & \ge & 6\sqrt{2}
\end{eqnarray*}
when $m\ge6.$ The inequality \eqref{eq:quartic-Chevinq} follows.
By the Intermediate Value Theorem, we have at least one root when
$\sin(m+1)\alpha$ changes between $-1$ and $1$ with $-\pi/4<\alpha<\pi/4$.
From the formula \eqref{eq:alphaform}, the number of roots $q_{1}$
when $0<\cos\theta<1$ is at least $2(\left\lfloor (m-2)/4\right\rfloor )$.
By symmetry, the number of roots $q_{1}\ne\pm i,1$ with $\Re q_{1}>-1/3$
on the unit circle is at least $4(\left\lfloor (m-2)/4\right\rfloor )$.
Note that each of these roots gives two more roots $q_{2}$ and $q_{3}$
on the quartic curve.

It remains to check the multiplicities of $q_{1}=\pm i,1$ in the
equation \eqref{eq:quartic-q}. We note that this equation has a root
$q_{1}=1$ with multiplicity at least 1. In the case $q_{1}=1$ we
obtain four more roots $q_{2}$, $q_{2}^{-1}$, $q_{3}$, and $q_{3}^{-1}$.
We now consider the case $q_{1}=\pm i$. The equation $q^{2}+(1+q_{1})q+q_{1}^{2}+q_{1}+1=0$
where $q=q_{2},q_{3}$ gives $(q_{2},q_{3})=(-1,i)$ or $(q_{2},q_{3})=(i,-1)$
when $q_{1}=i$ and $(q_{2},q_{3})=(-1,-i)$ or $(q_{2},q_{3})=(-i,-1)$
when $q_{1}=-i$ . Hence each of the roots $q_{1}=\pm i$ gives us
another root at $-1$ with the same multiplicity. To check the multiplicities
at $q_{1}=\pm i$, we need to differentiate the equation \eqref{eq:quartic-q}
with respect to $q_{1}$. We obtain its derivatives by applying implicit
differentiation to the equation $q^{2}+(1+q_{1})q+q_{1}^{2}+q_{1}+1=0$.
After substituting $q_{1}=\pm i$ in \eqref{eq:quartic-q} and its
derivatives, we see that the multiplicity of $\pm i$ is 
\[
\begin{cases}
2 & \mbox{if }m=4k\\
3 & \mbox{if }m=4k+1\\
0 & \mbox{if }m=4k+2\\
1 & \mbox{if }m=4k+3
\end{cases}.
\]

The table below tabulates the $3m-1$ roots of \eqref{eq:quartic-q}.

\begin{center}
\begin{tabular}{|c|c|c|c|c|}
\hline 
 & $m=4k$ & $m=4k+1$ & $m=4k+2$ & $m=4k+3$\tabularnewline
\hline 
\hline 
$q_{1}=e^{i\theta}$,$\Re q_{1}>-1/3$, $q_{1}\ne\pm i,1$ & $3(4k-4)$ & $3(4k-4)$ & $12k$ & $12k$\tabularnewline
\hline 
$q_{1}=1$ & $5$ & $5$ & $5$ & $5$\tabularnewline
\hline 
$q_{1}=\pm i$ & $6$ & $9$ & $0$ & $3$\tabularnewline
\hline 
Total & $12k-1$ & $12k+2$ & $12k+5$ & $12k+8$\tabularnewline
\hline 
\end{tabular}
\par\end{center}

All the roots counted on the table lie on the curves given in the
lemma. The number of roots counted equals the number of possible roots
which is $3m-1$. Also, as a consequence of the Intermediate Value
Theorem applied to the intervals formed by $\sin(m+1)\alpha=\pm1$,
the roots $q_{1}$ are dense on the portion of the unit circle with
real part at least $-1/3$. The lemma follows.

\end{proof}

\begin{theorem}\label{quartic}

Let $H_{m}(z)$ be a sequence of polynomials whose generating function
is
\[
\sum H_{m}(z)t^{m}=\frac{1}{1+B(z)t+A(z)t^{4}}
\]
where $A(z)$ and $B(z)$ are polynomials in $z$ with complex coefficients.
The roots of $H_{m}(z)$ which satisfy $A(z)\ne0$ lie on the curve
$\mathcal{C}_{4}$ defined by
\[
\Im\frac{B^{4}(z)}{A(z)}=0\qquad\mbox{and}\qquad0\le\Re\frac{B^{4}(z)}{A(z)}\le\frac{4^{4}}{3^{3}},
\]
and are dense there as $m\rightarrow\infty$. 

\end{theorem}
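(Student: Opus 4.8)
The plan is to follow the architecture of the proof of Theorem \ref{cubic}, with Lemma \ref{quarticqlemma} now supplying the distribution of the quotients of roots in $t$. First I would take a root $z_0$ of $H_m(z)$ with $A(z_0)\ne0$, and let $t_1,t_2,t_3,t_4$ be the roots of $D(t,z_0)=1+B(z_0)t+A(z_0)t^4$. For a quotient $q=t_i/t_j$, the definition \eqref{eq:qdisc} shows $q$ is a root of $\mathrm{Disc}_t(D(t,z_0);q)$, and by Lemma \ref{quarticqlemma} every such $q$ lies on the curve of Figure \ref{quotientquarticfig}. Computing this $q$-discriminant (which must degenerate to $\mathrm{Disc}_tD=A^2(256A-27B^4)$ as $q\to1$) and setting it to zero, I expect the relation
\[
\frac{B^4(z_0)}{A(z_0)}=\frac{(1+q+q^2+q^3)^4}{q^3(1+q+q^2)^3}=:g(q).
\]
Writing $P=q+q^2+q^3$ this is $g(q)=(P+1)^4/P^3$. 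The key structural point, which I would extract from the relations $\sum t_i=0$ and $\sum_{i<j}t_it_j=0$, is that the three quotients $q_1,q_2,q_3=t_1/t_4,t_2/t_4,t_3/t_4$ attached to $z_0$ are exactly the three roots of $s^3+s^2+s=P$, so they share a common value of $P$ and hence of $g$.

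The central task is to show that $g$ carries the curve of Figure \ref{quotientquarticfig} onto the real interval $[0,4^4/3^3]$. For reality I would argue as in Theorem \ref{cubic}: a direct check gives $g(1/q)=g(q)$, so on the unit-circle component $g(\bar q)=g(q)=\overline{g(q)}$ is real; and since each point of the quartic component occurs as one of $q_2,q_3$ arising from a unit-circle point $q_1$ (this is precisely what the first half of Lemma \ref{quarticqlemma} establishes), the identity $g(q_1)=g(q_2)=g(q_3)$ forces $g$ to be real on the quartic component too. For the range I would parametrize the circular arc by $q=e^{i\theta}$ with $\cos\theta\in[-1/3,1]$ and reduce $g$ to
\[
g(e^{i\theta})=\frac{64(1+\cos\theta)^2\cos^4\theta}{(1+2\cos\theta)^3}.
\]
Its derivative in $\cos\theta$ carries a factor $3\cos^2\theta+4\cos\theta+2$ with no real zero, so the sign is governed by $\cos^3\theta$; thus $g$ decreases from $4^4/3^3$ at $\cos\theta=-1/3$ to $0$ at $\cos\theta=0$ and increases back to $4^4/3^3$ at $\cos\theta=1$. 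The image is therefore exactly $[0,4^4/3^3]$, attained at $q=1$ (value $4^4/3^3$) and at $q=-1,\pm i$ (value $0$). This places every root $z_0$ on $\mathcal{C}_4$.

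I expect the range analysis to be the main obstacle, for a subtle reason: the real-variable function $\phi(P)=(P+1)^4/P^3$ has minimum $4^4/3^3$ on $P>0$ and maximum $0$ on $P<0$, so it entirely avoids the open interval $(0,4^4/3^3)$. The fact that $g$ nevertheless sweeps out $[0,4^4/3^3]$ is genuinely a statement about the complex values of $P=q+q^2+q^3$ along the curve, and only becomes transparent after the $\theta$-parametrization; one must also take care to match the interval endpoints to the distinguished points of the two components. Finally, density follows exactly as in Theorems \ref{quadratic} and \ref{cubic}: the quotients $q$ are dense on the curve of Figure \ref{quotientquarticfig} as $m\to\infty$, so the values $g(q)$ are dense in $[0,4^4/3^3]$, and openness of the map $B^4(z)/A(z)$ then yields a root of $H_m(z)$ in every neighborhood of a point of $\mathcal{C}_4$.
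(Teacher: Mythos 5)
Your proposal is correct and follows essentially the same route as the paper: the $q$-discriminant relation $B^{4}(z_0)/A(z_0)=(1+q+q^{2}+q^{3})^{4}/\bigl(q^{3}(1+q+q^{2})^{3}\bigr)$, Lemma \ref{quarticqlemma} for the location of the quotients, the transfer of reality from the circular arc to the quartic component via the identity $q^{3}+q^{2}+q=q_{1}^{3}+q_{1}^{2}+q_{1}$ (your common-$P$ observation is exactly the paper's step of multiplying $q_{1}^{2}+q^{2}+q_{1}q+q_{1}+q+1=0$ by $q_{1}-q$), and the same density sketch. Your explicit parametrization $g(e^{i\theta})=64(1+\cos\theta)^{2}\cos^{4}\theta/(1+2\cos\theta)^{3}$ with the derivative factorization is a welcome addition, since the paper merely asserts the range $[0,4^{4}/3^{3}]$ without carrying out this computation.
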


\begin{proof}

From the definition of $q$-discriminant in \eqref{eq:qdisc}, we
have 
\[
\mathrm{Disc}_{t}(1+B(z)t+A(z)t^{4};q)=-A^{2}(z)B^{4}(z)q^{3}(1+q+q^{2})^{3}+A^{3}(z)(1+q+q^{2}+q^{3})^{4}.
\]
If $q$ is a quotient of two roots of $1+B(z)t+A(z)t^{4}$, then 
\begin{eqnarray*}
\frac{B^{4}(z)}{A(z)} & = & \frac{(1+q+q^{2}+q^{3})^{4}}{q^{3}(1+q+q^{2})^{3}}.
\end{eqnarray*}
Let $f(q)$ be the function on the right side. We note that $f(q)$
maps $q_{1}=e^{i\theta}$ with $\Re q_{1}\ge-1/3$ to the real interval
$[0,4^{4}/3^{3}]$ since 
\[
f(q_{1})=\frac{(q_{1}^{3/2}+q_{1}^{-3/2}+q_{1}^{1/2}+q_{1}^{-1/2})^{4}}{(q_{1}+q_{1}^{-1}+1)^{3}}.
\]
If $q$ is a point on the quartic curve in Lemma \ref{quarticqlemma}
then $q$ and $q_{1}$ are related by 
\[
q_{1}^{2}+q^{2}+q_{1}q+q_{1}+q+1=0.
\]
Multiplying this equation by $q_{1}-q$, we obtain 
\[
q_{1}^{3}+q_{1}^{2}+q_{1}=q^{3}+q^{2}+q.
\]
Thus by the definition of $f(q)$, we have $f(q)=f(q_{1})$. Since
\[
\mathrm{Disc}_{t}(1+B(z)t+A(z)t^{4})=-3^{3}A^{2}(z)B^{4}(z)+4^{4}A^{3}(z),
\]
the roots of $H_{m}(z)$ lie on the curve $\mathcal{C}_{4}$. The
density of these roots follows from arguments similar to those in
the proof of Theorem \ref{quadratic}. 

\end{proof}

\textbf{Remark: }One may try to find the root distribution of $H_{m}(z)$
in the case $D(t,z)=1+B(z)t+A(z)t^{5}$. From computer experiments,
the distribution of the quotients of roots of $D(t,z)$ in the case
$m=50$ is given in the figure below.

\begin{figure}[H]
\begin{centering}
\includegraphics{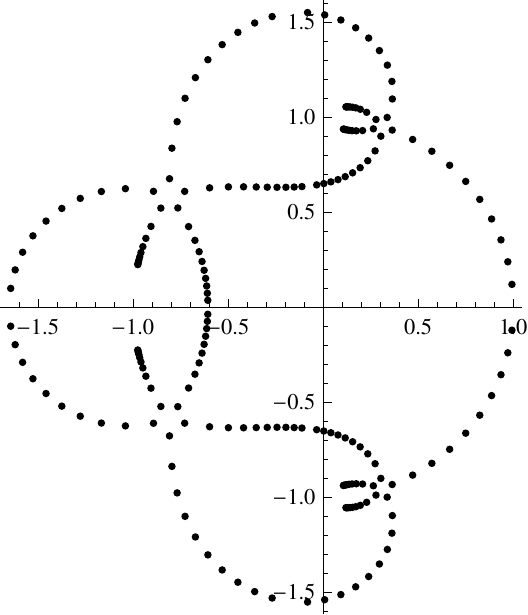}
\par\end{centering}

\caption{Distribution of the quotients of roots of the quintic denominator}
\end{figure}

We end this paper with the following conjecture.

\begin{conjecture}

Let $H_{m}(z)$ be a sequence of polynomials whose generating function
is
\[
\sum H_{m}(z)t^{m}=\frac{1}{1+B(z)t+A(z)t^{n}}
\]
where $A(z)$ and $B(z)$ are polynomials in $z$ with complex coefficients.
The roots of $H_{m}(z)$ which satisfy $A(z)\ne0$ lie on the curve
$\mathcal{C}_{n}$ defined by
\[
\Im\frac{B^{n}(z)}{A(z)}=0\qquad\mbox{and}\qquad0\le(-1)^{n}\Re\frac{B^{n}(z)}{A(z)}\le\frac{n^{n}}{(n-1)^{n-1}},
\]
and are dense there as $m\rightarrow\infty$.

\end{conjecture}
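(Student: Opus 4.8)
The plan is to carry the architecture of Theorems \ref{quadratic}, \ref{cubic}, and \ref{quartic} through verbatim, replacing each explicit computation by its degree-$n$ analogue. First I would record the general $q$-discriminant of the trinomial. Because the roots $t_1,\dots,t_n$ of $1+B(z)t+A(z)t^n$ satisfy $e_1=\dots=e_{n-2}=0$, the product \eqref{eq:qdisc} should collapse to
\[
\mathrm{Disc}_t(1+B(z)t+A(z)t^n;q) = (-1)^{(n-1)(n-2)/2} A^{n-2}(z) B^n(z)\, q^{n-1}\left(\frac{q^{n-1}-1}{q-1}\right)^{n-1} + (-1)^{n(n-1)/2} A^{n-1}(z)\left(\frac{q^n-1}{q-1}\right)^{n},
\]
which specializes to the three formulas already in the paper. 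Setting this to zero at a quotient $q=t_i/t_j$ and tracking signs gives the single identity $(-1)^n B^n(z)/A(z)=f(q)$, where a short simplification of the denominator shows
\[
f(q) = \frac{\big((q^n-1)/(q-1)\big)^n}{q^{n-1}\big((q^{n-1}-1)/(q-1)\big)^{n-1}} = \frac{(1+S)^n}{S^{n-1}}, \qquad S = S(q) = q+q^2+\dots+q^{n-1}.
\]
Thus $f$ factors through the single rational function $S$, a fact I will exploit below.

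Second, as in the cubic and quartic proofs I would expand $1/D(t,z_0)$ by partial fractions at a root $z_0$ of $H_m(z)$ (after an index shift to lower the powers of the quotients), divide the vanishing numerator by a suitable power of $t_n$, and rewrite $H_m(z_0)=0$ purely in terms of $q_1,\dots,q_{n-1}$ with $q_n=1$. The $n-2$ relations $e_1=\dots=e_{n-2}=0$ reduce these to a one-parameter family, and the quotient equation becomes a polynomial of degree roughly $(n-1)m$. The crux is the analogue of Lemma \ref{cubicqlemma} and Lemma \ref{quarticqlemma}: proving that every quotient lies on an explicit fixed curve $\Gamma_n$ independent of $A,B$ and is dense there. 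I expect $\Gamma_n$ to contain the unit-circle arc $\{e^{i\theta}:\cos\theta\ge -1/(n-1)\}$ (note the quadratic, cubic and quartic cases use $-1,-1/2,-1/3$) together with further algebraic components, matched to the arc by the cyclotomic relation $S(q)=S(q_1)$ that forces $f(q)=f(q_1)$, exactly as the quartic identity $q_1^3+q_1^2+q_1=q^3+q^2+q$ does.

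Third, granting the quotient lemma, I would verify that $f$ sends $\Gamma_n$ into $\mathbb{R}$ and onto $[0,n^n/(n-1)^{n-1}]$. On the arc one computes, using $\tfrac{q^k-1}{q-1}=q^{(k-1)/2}\tfrac{\sin(k\theta/2)}{\sin(\theta/2)}$, that the powers of $q$ cancel and
\[
f(e^{i\theta}) = \frac{\sin^n(n\theta/2)}{\sin^{n-1}\!\big((n-1)\theta/2\big)\,\sin(\theta/2)},
\]
which is real; its value $n^n/(n-1)^{n-1}$ is the limit as $\theta\to 0$, i.e. the confluent case $q\to 1$ where the $q$-discriminant degenerates to the ordinary discriminant $\mathrm{Disc}_tD=(-1)^{n(n-1)/2}\big((-1)^{n-1}(n-1)^{n-1}A^{n-2}B^n+n^nA^{n-1}\big)$, whose vanishing gives precisely $(-1)^nB^n/A=n^n/(n-1)^{n-1}$. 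Its other endpoint $0$ occurs at the $n$-th roots of unity, where $B^n=0$. Showing that these are genuinely the extrema (that $0\le f\le n^n/(n-1)^{n-1}$ throughout the arc, and equal values on the remaining components) is a self-contained but nontrivial calculus estimate. The density of the roots of $H_m$ then follows from the open-mapping argument of Theorem \ref{quadratic}: $B^n(z)/A(z)$ carries a neighborhood of a point of $\mathcal{C}_n$ onto an open set which, for large $m$, contains one of the dense values $(-1)^nf(q)$, forcing a root of $H_m$ nearby.

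The hard part will be the second step, the quotient lemma. The escalation from the cubic case---where a single symmetry $\zeta\mapsto -1-\zeta$ plus the Intermediate Value Theorem on $\sin(m+1)\theta=\pm1$ suffices---to the quartic case---which already required recasting the quotient equation through the Chebyshev polynomials $U_m$, a delicate dominant-term inequality valid only for $m\ge 6$, and a four-way analysis of the multiplicities of the exceptional quotients $\pm i,1$---shows that identifying $\Gamma_n$ and proving the exact root count is where the real difficulty lies. For general $n$ the governing polynomial has several components (already visible in the quintic figure), each presumably demanding its own Chebyshev-type majorization and its own exceptional-point bookkeeping, and I do not expect a uniform treatment without substantial new input; this is almost certainly why the statement is left as a conjecture.
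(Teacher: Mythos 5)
This statement is the paper's closing \emph{conjecture}; the paper gives no proof of it, so the only question is whether your proposal actually closes the gap, and it does not. The preparatory parts of your plan are correct and consistent with the paper wherever they can be checked: your general $q$-discriminant formula specializes exactly to the quadratic, cubic, and quartic formulas used in Theorems \ref{quadratic}, \ref{cubic}, and \ref{quartic}; the simplification $f(q)=(1+S)^n/S^{n-1}$ with $S=q+\cdots+q^{n-1}$ is right; the arc computation $f(e^{i\theta})=\sin^n(n\theta/2)/\bigl(\sin^{n-1}((n-1)\theta/2)\sin(\theta/2)\bigr)$ correctly recovers the endpoint $n^n/(n-1)^{n-1}$ as $\theta\to0$, matching the ordinary discriminant; and the closing density argument is the paper's own open-mapping argument. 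But all of this is the routine layer that the paper itself executes three times; none of it is where the difficulty lives.

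The genuine content of the conjecture is precisely the step you defer: the general-$n$ quotient lemma, i.e., the analogue of Lemma \ref{cubicqlemma} and Lemma \ref{quarticqlemma}. One must (i) identify the fixed curve $\Gamma_n$ on which every quotient $t_i/t_j$ of roots of $D(t,z_0)$ lies whenever $H_m(z_0)=0$, (ii) prove an exact root count for the degree-$\approx(n-1)m$ quotient polynomial that forces \emph{all} of its roots onto $\Gamma_n$ (the paper needs Chebyshev majorization, an $m\ge6$ dominant-term inequality, and painstaking multiplicity bookkeeping at exceptional points already at $n=4$), and (iii) establish density. You explicitly concede that you do not expect a uniform treatment of this step without substantial new input. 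Moreover, your third step silently assumes what is not known: that $f$ maps all of $\Gamma_n$ into $[0,n^n/(n-1)^{n-1}]$ and that every non-arc component of $\Gamma_n$ is matched to the arc by the relation $S(q)=S(q_1)$. That matching is a theorem in the quartic case ($q_1^3+q_1^2+q_1=q^3+q^2+q$), but for $n\ge5$ the quotient locus (visible in the paper's quintic figure) has components whose structure is unknown, so this is conjecture layered on conjecture. In short, your proposal is a sound roadmap that correctly isolates where the obstruction lies, but it is not a proof; the statement remains open after it, exactly as the paper leaves it.
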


\end{document}